\documentclass{amsart}
\usepackage{amssymb}
\usepackage{graphicx}

\newtheorem{thm}{Theorem}
\newtheorem{cor}{Corollary}
\newtheorem{lem}{Lemma}

\newtheorem{rem}{Remark}
\newtheorem{conj}{Conjecture}

\newcommand{\es}{{\mathcal S}}

\newcommand{\D}{{\mathbb D}}

\def\be{\begin{equation}}
\def\ee{\end{equation}}

\newcommand{\bee}{\begin{enumerate}}
\newcommand{\eee}{\end{enumerate}}

\newcommand{\blem}{\begin{lem}}
\newcommand{\elem}{\end{lem}}
\newcommand{\bthm}{\begin{thm}}
\newcommand{\ethm}{\end{thm}}
\newcommand{\bcor}{\begin{cor}}
\newcommand{\ecor}{\end{cor}}
\newcommand{\beg}{\begin{example}}
\newcommand{\eeg}{\end{example}}
\newcommand{\begs}{\begin{examples}}
\newcommand{\eegs}{\end{examples}}
\newcommand{\bdefe}{\begin{defin}}
\newcommand{\edefe}{\end{defin}}
\newcommand{\bprob}{\begin{prob}}
\newcommand{\eprob}{\end{prob}}
\newcommand{\bei}{\begin{itemize}}
\newcommand{\eei}{\end{itemize}}

\newcommand{\bcon}{\begin{conj}}
\newcommand{\econ}{\end{conj}}
\newcommand{\bcons}{\begin{conjs}}
\newcommand{\econs}{\end{conjs}}
\newcommand{\bprop}{\begin{propo}}
\newcommand{\eprop}{\end{propo}}
\newcommand{\br}{\begin{rem}}
\newcommand{\er}{\end{rem}}
\newcommand{\brs}{\begin{rems}}
\newcommand{\ers}{\end{rems}}
\newcommand{\bo}{\begin{obser}}
\newcommand{\eo}{\end{obser}}
\newcommand{\bos}{\begin{obsers}}
\newcommand{\eos}{\end{obsers}}
\newcommand{\bpf}{\begin{pf}}
\newcommand{\epf}{\end{pf}}
\newcommand{\ba}{\begin{array}}
\newcommand{\ea}{\end{array}}
\newcommand{\beq}{\begin{eqnarray}}
\newcommand{\beqq}{\begin{eqnarray*}}
\newcommand{\eeq}{\end{eqnarray}}
\newcommand{\eeqq}{\end{eqnarray*}}

\begin{document}
\bibliographystyle{amsplain}

\title[Some properties of bi-univalent functions]{On some properties of bi-univalent functions in the unit disc}

\author[M. Obradovi\'{c}]{Milutin Obradovi\'{c}}
\address{Department of Mathematics,
Faculty of Civil Engineering, University of Belgrade,
Bulevar Kralja Aleksandra 73, 11000, Belgrade, Serbia}
\email{obrad@grf.bg.ac.rs}

\author[N. Tuneski]{Nikola Tuneski}
\address{Department of Mathematics and Informatics, Faculty of Mechanical Engineering, Ss. Cyril and Methodius
University in Skopje, Karpo\v{s} II b.b., 1000 Skopje, Republic of North Macedonia.}
\email{nikola.tuneski@mf.edu.mk}

\author[P. Zaprawa]{Pawe{\l} Zaprawa}
\address{Department of Mathematic, Faculty of Mechanical Engineering, Lublin University of Technology, Poland.}
\email{p.zaprawa@pollub.pl}

\subjclass[2020]{30C45, 30C50, 30C55}
\keywords{univalent functions, bi-univalent functions, Grunsky coefficients, logarithmic coefficient, coefficient difference, second Hankel determinant.}


\begin{abstract}
In this paper we use a method based on the Grunsky coefficients to find upper bounds of the modulus of the initial coefficients, difference of the moduli of two consecutive initial coefficients, of the modulus of the initial logarithmic coefficient, and of the second Hankel determinant for the class of normalized bi-univalent functions.
\end{abstract}

\maketitle

\section{Introduction and definitions}

\medskip

Let $\mathcal{A}$ be the class of functions $f$ which are analytic  in the open unit disk $\D=\{z:|z|<1\}$ of the form
\be\label{e1}
f(z)=z+a_2z^2+a_3z^3+\cdots,
\ee
and let $\mathcal{S}$ be the subclass of $\mathcal{A}$ consisting of functions that are univalent in $\D$.

\medskip

Further, with $ \mathcal{S}_{b}$ we denote the class of functions $f\in \mathcal{S}$ such that its inverse function $f^{-1}$ is also univalent in $\D$.
For example the functions $z$, $\frac{z}{1-z}$, $\frac{1}{2}\ln\frac{1+z}{1-z}$ are characteristic examples of bi-univalent functions, but the Koebe function $k(z)=\frac{z}{(1-z)^{2}}$ is not bi-univalent since $k(\D)$ does not contain the unit disc. It means that $\mathcal{S}_{b}\subset \mathcal{S}$.

\medskip

Although the famous Bieberbach conjecture $|a_n|\le n$ for $n\ge2$ was proved by de Branges  in 1985 \cite{Bra85}, a great many other problems concerning the coefficients of univalent functions remain open and are studied over entire class $\es$, or, over its subclasses. Also, the problem of initial coefficients for the class $ \mathcal{S}_{b}$ is not so simple. Namely, Lewin \cite{Lewin} showed that the second coefficient of every $f\in \mathcal{S}_{b}$ satisfy the inequality $|a_{2}|\leq 1.51$. Later, D.L. Tan \cite{Tan} obtained better result $|a_{2}|\leq 1.485$ and this estimate is the best known. Other results for the same coefficient we can find in \cite{Ked-1985, KKR, Net-1969}.

\medskip

In this paper we give estimates for the initial coefficients $a_{3},\,a_{4},\,a_{5}$ of functions belonging to the class
$ \mathcal{S}_{b}$. Also, some problems concerning coefficient difference,  logarithmic coefficients and  Hankel determinants in the class $ \mathcal{S}_{b}$ will be considered.

\medskip

Our main tool will be the Grunsky coefficients and its properties. Here are basic definitions and results  on those coefficients based on the book of N. A. Lebedev (\cite{Lebedev}).

\medskip

Let $f \in \mathcal{S}$ and let
\[
\log\frac{f(t)-f(z)}{t-z}=\sum_{p,q=0}^{\infty}\omega_{p,q}t^{p}z^{q},
\]
where $\omega_{p,q}$ are called Grunsky's coefficients with property $\omega_{p,q}=\omega_{q,p}$.
For those coefficients we have the next Grunsky's inequalities (\cite{duren,Lebedev}):
\be\label{eq 4}
\sum_{q=1}^{\infty}q \left|\sum_{p=1}^{\infty}\omega_{p,q}x_{p}\right|^{2}\leq \sum_{p=1}^{\infty}\frac{|x_{p}|^{2}}{p}
\ee
and
\be\label{eq 5}
\left|\sum_{p=1}^{\infty} \sum_{q=1}^{\infty} \omega_{p,q} x_{p} x_{q} \right|  \leq \sum_{p=1}^{\infty}\frac{|x_{p}|^{2}}{p},
\ee
where $x_{p}$ are arbitrary complex numbers such that $0< \sum_{p=1}^{\infty}\frac{|x_{p}|^{2}}{p}< +\infty$. If $\overline{\lim}_{p\to\infty} \sqrt[p]{|x_p|}<1$, then in \eqref{eq 4} we have equality if and only if the area of $\widehat{\mathbb{C}} \setminus g(\D)$ is zero, where $g(z)= \frac{1}{f(z)}$. In \eqref{eq 5} equality appears if and only if $\sum_{p=1}^{\infty}\omega_{p,q}x_{p} = \lambda \frac{1}{q} \overline{x}_q$, $\lambda=1,2,\ldots$, and $\lambda$ is some constant with $|\lambda|=1$.

\medskip

Further, it is well-known that if $f$ given by \eqref{e1}
belongs to $\mathcal{S}$, then also
\be\label{eq 6}
f_{*}(z)=\sqrt{f(z^{2})}=z +c_{3}z^3+c_{5}z^{5}+\cdots
\ee
belongs to the class $\mathcal{S}$. Then for the function $f_{*}$ we have the appropriate Grunsky's
coefficients of the form $\omega_{2p-1,2q-1}$ and the inequalities \eqref{eq 4} and \eqref{eq 5} receive the forms:
\be\label{eq 7}
\sum_{q=1}^{\infty}(2q-1) \left|\sum_{p=1}^{\infty}\omega_{2p-1,2q-1}x_{2p-1}\right|^{2}\leq \sum_{p=1}^{\infty}\frac{|x_{2p-1}|^{2}}{2p-1}
\ee
and
\be\label{eq-8}
\left|\sum_{p=1}^{\infty} \sum_{q=1}^{\infty} \omega_{2p-1,2q-1} x_{2p-1} x_{2q-1} \right|  \leq \sum_{p=1}^{\infty}\frac{|x_{2p-1}|^{2}}{2p-1},
\ee
respectively.

\medskip

Here, and further in the paper we omit the upper index (2) in  $\omega_{2p-1,2q-1}^{(2)}$ if compared with Lebedev's notation.

\medskip

From inequalities \eqref{eq 7} and \eqref{eq-8}, when $x_{2p-1}=0$, $p=3,4,\ldots$, we have
\[|\omega_{11} x_1 +\omega_{31} x_3 |^2 +3|\omega_{13} x_1 +\omega_{33} x_3 |^2 + 5|\omega_{15} x_1 +\omega_{35} x_3 |^2 \le |x_1|^2+\frac{|x_3|^2}{3}\]
and
\[ |\omega_{11} x_1^2 +2\omega_{13}x_1 x_3 +\omega_{33}x_3^2 |\le |x_1|^2+\frac{|x_3|^2}{3}, \]
respectively. From \eqref{eq 7}, for $x_1=1$ and $x_3=0$, we obtain
\begin{equation}\label{eq-11}
  |\omega_{11}|^2 + 3 |\omega_{13}|^2 + 5|\omega_{15}|^2 \le1,
\end{equation}
and for $x_1=0$ and $x_3=1$, we obtain
\[  |\omega_{13}|^2 + 3 |\omega_{33}|^2 + 5|\omega_{35}|^2 \le \frac13. \]

\medskip

For the functions in $\mathcal{S}$ we have the following result that we will use later on.

\medskip

\begin{lem}\label{lem-1}
Grunsky coefficient $w_{13}$  for a function $f\in\mathcal{S}$ satisfies
\[|w_{13}|\leq
\begin{cases}
\tfrac12(1+|w_{11}|^2)\ &, |w_{11}|\in[0,b]\\
\sqrt{\tfrac13(1-|w_{11}|^2)}\ &, |w_{11}|\in[b,1]\ ,
\end{cases}\]
where $b=\sqrt{\frac{2 \sqrt{7}-5}{3}} = 0.311\ldots$.
\end{lem}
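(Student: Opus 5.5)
The plan is to combine two Grunsky inequalities for the odd function $f_*$: the norm inequality \eqref{eq-11}, and the quadratic-form inequality \eqref{eq-8} restricted to $x_1,x_3$. The inequality \eqref{eq-11} immediately gives
\[
3|\omega_{13}|^2\le 1-|\omega_{11}|^2-5|\omega_{15}|^2\le 1-|\omega_{11}|^2,
\]
hence $|\omega_{13}|\le\sqrt{\tfrac13(1-|\omega_{11}|^2)}$ for every $f\in\mathcal S$. So the second branch of the lemma holds on all of $[0,1]$, and the real work is to show that on $[0,b]$ the bound $\tfrac12(1+|\omega_{11}|^2)$ holds as well.

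For the first branch I would use the quadratic-form inequality, stated just before \eqref{eq-11}:
\[
|\omega_{11}x_1^2+2\omega_{13}x_1x_3+\omega_{33}x_3^2|\le |x_1|^2+\tfrac13|x_3|^2 .
\]
The coefficient $\omega_{33}$ is unknown, so I would remove it by averaging. Take $x_1=1$ and $x_3=\varepsilon t$ with $|\varepsilon|=1$, and add the inequalities for $\varepsilon$ and $-\varepsilon$. The triangle inequality gives
\[
|4\omega_{13}\varepsilon t|\le 2\bigl(1+\tfrac{t^2}{3}\bigr)+|\omega_{11}-\ldots| ,
\]
which is too crude. A better route is to rotate so that $2\omega_{13}x_1x_3$ is aligned with $\omega_{11}x_1^2$ and then optimize in $t$; this produces bounds of the form $|\omega_{13}|\le \frac{1}{2t}\bigl(1+\tfrac{t^2}{3}-\ldots\bigr)$.

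A cleaner approach is probably to use \eqref{eq 7} with general $x_1,x_3$. Keeping only the $q=1$ and $q=3$ terms, we have
\[
|\omega_{11}x_1+\omega_{13}x_3|^2+3|\omega_{13}x_1+\omega_{33}x_3|^2\le |x_1|^2+\tfrac13|x_3|^2 .
\]
The idea is to choose $x_3=-3\omega_{33}x_1/\ldots$, or more simply $x_3$ so that the second term is dropped. We then pick $x_1=1$ and $x_3=s\,\overline{\omega_{13}}\,\omega_{11}/|\omega_{11}|$ and optimize over $s>0$. The resulting inequality, $(|\omega_{11}|+s|\omega_{13}|)^2\le 1+s^2/3$, gives, after maximizing $\frac{\sqrt{1+s^2/3}-|\omega_{11}|}{s}$, a bound of exactly the form $\tfrac12(1+|\omega_{11}|^2)$. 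Indeed, the extremal condition yields $s$ with $\sqrt{1+s^2/3}$ linear in $|\omega_{11}|$, and the maximum evaluates to a rational expression in $|\omega_{11}|$ that should simplify to $\tfrac12(1+|\omega_{11}|^2)$ in the relevant range. Concretely, I expect $\max_s \frac{\sqrt{1+s^2/3}-a}{s}$ (with $a=|\omega_{11}|$) to equal $\sqrt{(1-a^2)/3}$, which suggests that the phase choice must instead mix in the real structure, for instance using $\omega_{11}$ versus $\overline{\omega_{11}}$ so that $a$ enters with a plus sign.

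The main obstacle is this choice of test vector $(x_1,x_3)$, or the combination of \eqref{eq 7} and \eqref{eq-8}, that actually produces the bound $\tfrac12(1+a^2)$. I would first try \eqref{eq-8} with $x_1=1$ and $x_3=\pm t e^{i\theta}$ and average, which kills $\omega_{33}$. I would then bound the remaining $\omega_{11}$ term via \eqref{eq-11} and optimize $t$, for which $t=\sqrt3$-type choices look natural.

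Finally, $b$ is the crossing point of the two branches. Setting $\tfrac14(1+a^2)^2=\tfrac13(1-a^2)$ gives $3a^4+10a^2-1=0$, so $a^2=\frac{2\sqrt7-5}{3}$. The final step is to check that $\tfrac12(1+a^2)$ is the smaller bound on $[0,b]$ and that $\sqrt{(1-a^2)/3}$ is the smaller bound on $[b,1]$. The lemma then follows by taking the minimum of the two bounds.
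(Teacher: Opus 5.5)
Your proof of the second branch via \eqref{eq-11} is correct and matches the paper, and so is your computation of $b$ from $\tfrac14(1+a^2)^2=\tfrac13(1-a^2)$. The gap is the first branch. It is the only nontrivial part, since on $[0,b)$ it is strictly sharper than $\sqrt{(1-|\omega_{11}|^2)/3}$. You never prove it: you call the choice of test vector ``the main obstacle'' and leave it there.

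The tools you propose cannot close it. Take $\omega_{11}=0$, $\omega_{13}=1/\sqrt3$ and $\omega_{15}=\omega_{33}=\omega_{35}=0$. This satisfies \eqref{eq-11}, the fifth relation in \eqref{eq-13}, and both two-variable inequalities displayed before \eqref{eq-11}, for every $x_1,x_3$. The first of these becomes an identity, and the second reduces to $\tfrac{2}{\sqrt3}|x_1x_3|\le|x_1|^2+\tfrac13|x_3|^2$, which holds. Yet this choice violates the required bound $|\omega_{13}|\le\tfrac12(1+|\omega_{11}|^2)=\tfrac12$. So no rotation, averaging or optimization over $(x_1,x_3)$ in those inequalities can give the first branch. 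Your own computation with \eqref{eq 7}, which returns exactly $\sqrt{(1-a^2)/3}$, is a symptom of this. Also, replacing $x_3$ by $-x_3$ in \eqref{eq-8} does not eliminate $\omega_{33}$, because $\omega_{33}x_3^2$ is unchanged. Subtracting the two instances isolates $4\omega_{13}x_3$ and gives only $|\omega_{13}|\le 1/\sqrt3$.

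The missing ingredient is an inequality that lies outside the odd-index block. By \eqref{eq-13},
\[a_3-a_2^2=2\omega_{13}+3\omega_{11}^2-4\omega_{11}^2=2\omega_{13}-\omega_{11}^2 .\]
For $f\in\mathcal{S}$ the classical bound $|a_3-a_2^2|\le 1$ holds; it is the area theorem applied to $1/f(1/\zeta)$. Equivalently, it is \eqref{eq 4} for $f_*$ with $x_2=1$, applied to the coefficient of $t^2z^2$ in $\log\frac{f_*(t)-f_*(z)}{t-z}$. That coefficient equals $\omega_{13}-\tfrac12\omega_{11}^2$, and \eqref{eq 7} and \eqref{eq-8} never see it. The triangle inequality then gives $2|\omega_{13}|\le 1+|\omega_{11}|^2$, which is the paper's two-line proof of the first branch. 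The shape of the bound was the hint: it increases in $|\omega_{11}|$, whereas everything you extracted from \eqref{eq 7} and \eqref{eq-11} decreases in $|\omega_{11}|$. That points to a triangle-inequality bound on $2\omega_{13}-\omega_{11}^2$, not to a Grunsky test vector.
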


The first inequality is a consequence of the inequality $|2w_{13}-w_{11}^2|\leq 1$, obtained from $|a_3-a_2^2|\leq 1$, valid for $f\in\mathcal{S}$. The second one follows from the inequality \eqref{eq-11}.

\medskip

As it has been shown in \cite[p. 57]{Lebedev}, if $f$ is given by \eqref{e1} then the coefficients $a_{2}$, $ a_{3}$, $ a_{4}$ and $a_5$ are expressed by Grunsky's coefficients  $\omega_{2p-1,2q-1}$ of the function $f_{*}$ given by
\eqref{eq 6} in the following way:
\begin{equation}\label{eq-13}
\begin{split}
a_{2}&=2\omega _{11},\\
a_{3}&=2\omega_{13}+3\omega_{11}^{2}, \\
a_{4}&=2\omega_{33}+8\omega_{11}\omega_{13}+\frac{10}{3}\omega_{11}^{3},\\
a_{5}&=2\omega_{35}+8\omega_{11}\omega_{33}+5\omega_{13}^{2}+18\omega_{11}^{2}\omega_{13}+\frac{7}{3}\omega_{11}^{4},\\
0&=3\omega_{15}-3\omega_{11}\omega_{13}+\omega_{11}^{3}-3\omega_{33},\\
0&=\omega_{17}-\omega_{35}-\omega_{11}\omega_{33}-\omega_{13}^{2}+\frac{1}{3}\omega_{11}^{4}.
\end{split}
\end{equation}

\medskip

A comprehensive overview of the application of Grunsky coefficients in the general class of univalent functions is given in \cite{2026}.

\medskip

\section{The initial coefficients in the class $ \mathcal{S}_{b}$ }

The first theorem will give estimates of the modulus of the third, the fourth and the fifth coefficient of the expansion of a normalized bi-univalent function.

\begin{thm}\label{th1}
Let $f\in\mathcal{S}_{b}$ be given by \eqref{e1}. Then
\begin{itemize}
  \item[($i$)] $|a_{3}|\leq 2.427\ldots$\ ,
  \item[($ii$)] $|a_{4}| \leq 3.461\ldots$\ ,
  \item [($iii$)]$|a_{5}|\leq 4.993\ldots$\ .
\end{itemize}
\end{thm}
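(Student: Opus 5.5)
The plan is to work with the Grunsky coefficients of $f$ and, additionally, use Tan's bound $|a_2|\le 1.485$. Since $f\in\mathcal{S}_b$, every coefficient can be written through the Grunsky coefficients $\omega_{p,q}$ of $f_*$ via \eqref{eq-13}. All the inequalities of Section~1 (\eqref{eq 7}, \eqref{eq-8}, \eqref{eq-11} and Lemma~\ref{lem-1}) are therefore available. Set $t=|\omega_{11}|=|a_2|/2$, so that $t\le 1.485/2=0.7425$. Each estimate will be reduced to a function of $t$ that is increasing on $[0,0.7425]$ and then evaluated at $t=0.7425$.

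For (i) I use $a_3=2\omega_{13}+3\omega_{11}^2$, so $|a_3|\le 3t^2+2|\omega_{13}|$, and then bound $|\omega_{13}|$ by Lemma~\ref{lem-1}.
\begin{itemize}
\item For $t\ge b$ the lemma gives
\[
|a_3|\le \varphi(t):=3t^2+2\sqrt{\tfrac13(1-t^2)} .
\]
We have $\varphi'(t)=2t\bigl(3-1/\sqrt{3(1-t^2)}\bigr)>0$ as long as $1-t^2>1/27$, so $\varphi$ is increasing on $[b,0.7425]$.
\item For $t\le b$ the first branch gives $|a_3|\le 1+4t^2$, which is small.
\end{itemize}
Hence $|a_3|\le\varphi(0.7425)\approx1.654+0.773=2.427$. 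As a consistency check, I would also record the analogous identity for $g=f^{-1}$: its Grunsky coefficients satisfy $\omega'_{11}=-\omega_{11}$ and $\omega_{13}+\omega'_{13}=\omega_{11}^2$. This shows the bound is symmetric in $f$ and $f^{-1}$, but the essential bi-univalence input is just Tan's estimate.

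For (ii) I write
\[
a_4=2\omega_{33}+8\omega_{11}\omega_{13}+\tfrac{10}{3}\omega_{11}^3 .
\]
The fifth relation in \eqref{eq-13} lets me trade $\omega_{33}$ for $\omega_{15}$ wherever convenient: $\omega_{33}=\omega_{15}-\omega_{11}\omega_{13}+\frac13\omega_{11}^3$. Then
\[
a_4=2\omega_{15}+6\omega_{11}\omega_{13}+4\omega_{11}^3 .
\]
Now $|\omega_{13}|$ and $|\omega_{15}|$ are jointly controlled by \eqref{eq-11}: $3|\omega_{13}|^2+5|\omega_{15}|^2\le 1-t^2$. Maximizing the linear form $2y+6tx$ on this ellipse by Cauchy--Schwarz gives
\[
|a_4|\le 4t^3+\sqrt{(1-t^2)\bigl(\tfrac45+12t^2\bigr)} .
\]
I would then compare this with the bound obtained by keeping $\omega_{33}$ and using \eqref{eq 7} with $x_1=0,x_3=1$ together with Lemma~\ref{lem-1}. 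I will take whichever bound is smaller (possibly splitting $t$ at $b$), check monotonicity in $t$, and evaluate at $0.7425$, expecting $3.461\ldots$.

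For (iii) I use the formula for $a_5$ and eliminate $\omega_{35}$ with the sixth relation, $\omega_{35}=\omega_{17}-\omega_{11}\omega_{33}-\omega_{13}^2+\frac13\omega_{11}^4$. I also express $\omega_{33}$ through $\omega_{15}$ as above, so that $a_5$ becomes a polynomial in $\omega_{11},\omega_{13},\omega_{15},\omega_{17}$. The row $q=1$ of \eqref{eq 7} (extended to include $7|\omega_{17}|^2$) then bounds $(\omega_{13},\omega_{15},\omega_{17})$ in an ellipsoid given $t$, and a Cauchy--Schwarz/Lagrange step gives a function of $t$. The quadratic term in $\omega_{13}$ prevents a pure Cauchy--Schwarz bound. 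I will therefore treat $|\omega_{13}|=s$ as a second parameter, optimize the linear part in $(\omega_{15},\omega_{17})$ for fixed $s,t$, and then maximize over $s$ constrained by Lemma~\ref{lem-1}.

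This two-parameter optimization for $a_5$, together with choosing the right elimination so the numerical bound reaches $4.993\ldots$, is where I expect the main difficulty. If the direct approach overshoots, I would use \eqref{eq-8} with suitable $x_1,x_3$ to control the combination $\omega_{11}x_1^2+2\omega_{13}x_1x_3+\omega_{33}x_3^2$ directly.
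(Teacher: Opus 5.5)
Your proposal is correct. It rests on the same ingredients as the paper: Tan's bound $|\omega_{11}|\le a=0.7425$, the representations \eqref{for_a4} and \eqref{for_a5}, and $|\omega_{11}|^2+3|\omega_{13}|^2+5|\omega_{15}|^2+7|\omega_{17}|^2\le 1$. Part (i) is essentially the paper's argument; your split at $b$ is not needed, since $\varphi$ is increasing on all of $[0,a]$.

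Where you differ is the optimization step. The paper bounds $|\omega_{15}|$ and $|\omega_{17}|$ separately by square roots and then analyses a two-variable function on $\Omega$ (interior critical points plus the edges). You instead apply Cauchy--Schwarz to the linear part.

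In (ii), your bound $4t^3+\sqrt{(1-t^2)(\frac45+12t^2)}$ is exactly $\max_y f_2(t,y)$ over $0\le y\le\sqrt{(1-t^2)/3}$. Its derivative $12t^2+t(11.2-24t^2)/\sqrt{(1-t^2)(\frac45+12t^2)}$ is positive on $(0,a]$. At $t=a$ it equals $3.4614\ldots$, with extremal $|\omega_{13}|\approx 0.365$, which is the paper's $y_2'$. So the stated constant follows from a one-variable monotonicity check, and the comparison with an $\omega_{33}$-based bound is unnecessary.

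In (iii), your joint estimate $6t|\omega_{15}|+2|\omega_{17}|\le\sqrt{\frac47+\frac{36}{5}t^2}\,\sqrt{1-t^2-3s^2}$ is pointwise no larger than the paper's $\bigl(\frac{2}{\sqrt7}+\frac{6}{\sqrt5}t\bigr)\sqrt{1-t^2-3s^2}$. This is because the paper discards $5|\omega_{15}|^2$ in \eqref{for_w17}. Hence your two-variable function is dominated by $f_3$ and cannot overshoot $4.993\ldots$, so the fallback to \eqref{eq-8} is never needed. By my numerical estimate its maximum on $\Omega$ is about $4.81$, attained at $t=a$, $s\approx 0.356$. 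Your route therefore gives a slightly better bound for $|a_5|$. It still leaves a numerical two-variable maximization of the same kind as the paper's.
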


\begin{proof}
Since $f$ is bi-univalent in the unit disk, we have $|a_{2}|\leq 1.485$, and further from \eqref{eq-13}, we have $|a_{2}|=2|\omega _{11}|$, i.e., $2|\omega _{11}|\leq1.485$ and
\begin{equation}\label{def_a}
|\omega _{11}|\leq 0.7425=:a\ .
\end{equation}

\medskip

\begin{itemize}
\item[(i)] From \eqref{eq-11} we have $  |\omega_{11}|^2 + 3 |\omega_{13}|^2 \le1,$ i.e., $|\omega _{13}|\leq\frac{1}{\sqrt{3}}\sqrt{1-|\omega _{11}|^{2}}$. Using this and the expression for $a_3$ from \eqref{eq-13}, we receive
\begin{equation*}
\begin{split}
|a_{3}| &=|2\omega_{13}+3\omega_{11}^{2}|\leq2|\omega_{13}|+3|\omega _{11}|^{2}\\
& \leq \frac{2}{\sqrt{3}}\sqrt{1-|\omega _{11}|^{2}}+3|\omega _{11}|^{2}=:f_{1}(|\omega _{11}|),
\end{split}
\end{equation*}
where $f_{1}(x)=\frac{2}{\sqrt{3}}\sqrt{1-x^{2}}+3x^{2}$, $x\in[0, a]$.
It is easy to verify that $f_1$ is strictly increasing on $[0,a]$, so
\[|a_3| \le f_{1}(a)\leq \frac{2}{\sqrt{3}}\sqrt{1-a^{2}}+3a^{2}=2.427\ldots .\]

\medskip

\item[(ii)] The fifth relation in \eqref{eq-13} gives
$$\omega_{33}=\omega_{15}-\omega_{11}\omega_{13}+\frac{1}{3}\omega_{11}^{3},$$
which, together with the relation for $a_4$, lead to
\begin{equation}\label{for_a4}
a_{4}=2\omega_{15}+6\omega_{11}\omega_{13}+4\omega_{11}^{3}\ .
\end{equation}
Hence,
\[|a_{4}|\leq 2|\omega_{15}|+6|\omega_{11}||\omega_{13}|+4|\omega_{11}|^{3}\ .\]
Finally, \eqref{eq-11} implies
\[ |a_{4}|\leq 4|\omega_{11}|^{3} + 6|\omega_{11}||\omega_{13}| +\frac{2}{\sqrt{5}}\sqrt{1-|\omega_{11}|^{2}-3|\omega_{13}|^{2}}=:f_{2}(|\omega_{11}|,|\omega_{13}|), \]
where
$$f_{2}(x,y)= 4x^{3}+6xy+\frac{2}{\sqrt{5}}\sqrt{1-x^{2}-3y^{2}}\ ,\ (x,y)\in\Omega$$
where, by Lemma \ref{lem-1},
\begin{equation}\label{def_Omega}
\Omega=\left\{(x,y): 0\leq x\leq a, 0\leq y\leq\min\{\tfrac12(1+x^2), \sqrt{\tfrac13(1-x^2)}\}\right\}\ ,
\end{equation}
or, in other words, $\Omega = \Omega_1 \cup \Omega_2$ and
\[
\begin{split}
\Omega_1 &= \left\{(x,y) : 0\leq x \leq b,\, 0\leq y\leq \tfrac12(1+x^2) \right\},\\
\Omega_2 &= \left\{(x,y) : b\leq x \leq a,\, 0\leq y\leq \sqrt{\tfrac13(1-x^2)} \right\}\ .
\end{split}
\]
From now on, we define 
\[d:=\sqrt{\tfrac13(1-a^{2})}=0.386\ldots\ .\]

Now, let find maximal value of $f_2$ on $\Omega$.

\medskip

To derive the critical points of $f_2$, we need to solve in the interior of $\Omega$ the following system
\[
\left\{
\begin{split}
\frac{\partial f_2(x,y)}{\partial x} &= 12 x^2+6 y -\frac{2}{\sqrt5}\frac{x}{ \sqrt{1-x^2-3 y^2}} = 0,\\
\frac{\partial f_2(x,y)}{\partial y} &= 6 x- \frac{6}{\sqrt5} \frac{y}{\sqrt{1-x^2-3 y^2}} = 0.
\end{split}
\right.
\]
Now, from
\[3y\frac{\partial f_2(x,y)}{\partial x} - x \frac{\partial f_2(x,y)}{\partial y} =
18y^2+36x^2y-6x^2\ , \]
we receive
\begin{equation}\label{for_1}
x^2=\frac{3y^2}{1-6y}\ ,
\end{equation}
which is false for $y>1/6$.
 
Substituting it in the second equation of the system gives
\[15\left[(1-3y^2)(1-6y)-3y^2\right]-(1-6y)^2=0\ ,\]
which has only one solution in $[0,1/6]$; namely, $y'_1=0.153\ldots$. In this case, it follows from \eqref{for_1} that $x'_1=0.961\ldots$ which is greater than $a$. This means that the greatest value of $f_2$ is obtained on the boundary of $\Omega$.

We have:
\begin{itemize}
\item[-]  $f_2(0,y) = \frac{2}{\sqrt{5}} \sqrt{1-3 y^2}$ is decreasing for $y\in[0,1/2]$, so  $f_2(0,y)\le f_2(0,0) = \frac{2}{\sqrt{5}}  = 0.894\ldots$,  
\item[-]  for $f_2(a,y) = 4a^3+6ay+\frac{2}{\sqrt{5}}\sqrt{1-a^2-3 y^2}$, $y\in[0,d]$ we have $f_2(a,y) \le f_2(a,y_2) = 3.461\ldots$, where $y'_2=0.365\ldots$ is the unique real solution of the equation $\partial f_2(a,y)/\partial y=0$ on the interval $\left(0, d\right) $,
\item[-]  $f_2(x,0) = 4 x^3+\frac{2}{\sqrt{5}}\sqrt{1-x^2}$ is increasing on $(0,a)$, so $f_2(x,0) \le f_2(a,0) = 2.236\ldots$, 
\item[-]  $g_1(x) := f_2(x,(1+x^2)/2) = 3x+7 x^3+   \sqrt{1-10x^2-3x^4}/\sqrt{5}  \le g_1(x'_3) = 1.212\ldots$, 
where $x'_3=0.298\ldots$ is the unique real solution of the equation $g'_1(x)=0$ on the interval $\left(0, b\right)$,
\item[-]  $g_2(x) := f_2(x,\sqrt{1-x^2}/\sqrt{3}) = 4 x^3+2 \sqrt{3} x\sqrt{1-x^2}  \le g_2(a) = 3.360\ldots$, since $g_2$ is increasing on $(b,a)$.
\end{itemize}

Combining the analysis from (ii), we receive that the function $f_2$, on the domain $\Omega$ achieves the greatest value $3.461\ldots$ obtained for $x=a$ and $y=y'_2=0.365\ldots$, i.e.,
\[ |a_4| \le f_2(a,y_2) = 3.461\ldots.\]

\item[(iii)] We will work in a similar way as in (ii). Deriving $\omega_{33}$ and $ \omega_{35}$ from \eqref{eq-13} and applying them in the expression for $a_5$ from \eqref{eq-13} we obtain
\begin{equation}\label{for_a5}
a_{5}=2\omega_{17}+6\omega_{11}\omega_{15}+12\omega_{11}^{2}\omega_{13}+3\omega_{13}^{2}+5\omega_{11}^{4}\ ,
\end{equation}
and so
$$|a_{5}|\leq 2|\omega_{17}|+6|\omega_{11}||\omega_{15}|+12|\omega_{11}|^{2}|\omega_{13}|+3|\omega_{13}|^{2}+5|\omega_{11}|^{4}.$$
From \eqref{eq-8}, in a similar way as we received  \eqref{eq-11}, we can also obtain
$$|\omega_{11}|^2 + 3 |\omega_{13}|^2 + 5|\omega_{15}|^2 +7|\omega_{17}|^2\leq1,$$
which implies
\begin{eqnarray}\label{for_w17}
|\omega_{17}| &\leq& \frac{1}{\sqrt{7}}\sqrt{1-|\omega_{11}|^{2}-3|\omega_{13}|^{2}-5|\omega_{15}|^{2}} \notag \\
&\leq& \frac{1}{\sqrt{7}}\sqrt{1-|\omega_{11}|^{2}-3|\omega_{13}|^{2}}.
\end{eqnarray}
Finally, we get
\[
\begin{split}
|a_{5}| &\leq 5|\omega_{11}|^{4}+12|\omega_{11}|^{2}|\omega_{13}|+3|\omega_{13}|^{2}
\\
&+ \left(\frac{2}{\sqrt{7}}+\frac{6}{\sqrt{5}}|\omega_{11}|\right)\sqrt{1-|\omega_{11}|^{2}-3|\omega_{13}|^{2}}
 = :f_{3}(|\omega_{11}|,|\omega_{13}|),
\end{split}
\]
where
$$f_{3}(x,y) = 5x^{4} +12x^{2}y+3y^{2} + \left(\frac{2}{\sqrt{7}}+\frac{6}{\sqrt{5}}x\right)\sqrt{1-x^{2}-3y^{2}},$$
and $(x,y)\in \Omega$.

\medskip

For the critical points of $f_3$ in the interior of $\Omega$ we have
\[
\left\{
\begin{split}
\frac{\partial f_3(x,y)}{\partial x} &= 20 x^3 + 24xy-\frac{\left(\frac{6 x}{\sqrt{5}}+\frac{2}{\sqrt{7}}\right) x}{\sqrt{1-x^2-3 y^2}}\\
&\quad +\frac{6}{\sqrt{5}}\sqrt{1-x^2-3 y^2} = 0\\
\frac{\partial f_3(x,y)}{\partial y} &= 12 x^2+6 y -\frac{3 \left(\frac{6 x}{\sqrt{5}}+\frac{2}{\sqrt{7}}\right) y}{\sqrt{1-x^2-3 y^2}}= 0\ .
\end{split}
\right.
\]
Since
\begin{multline}\label{eq-q1}
 3y\frac{\partial f_3(x,y)}{\partial x} - x \frac{\partial f_3(x,y)}{\partial y} \\ =
\frac{18y}{\sqrt5} \sqrt{1-x^2-3 y^2}+12x^3(5y-1)+6xy(12y-1)\ ,
\end{multline}
we solve the system
\[
\left\{
\begin{split}
&\frac{18y}{\sqrt5} \sqrt{1-x^2-3 y^2}+12x^3(5y-1)+6xy(12y-1) = 0\\
&\sqrt{1-x^2-3 y^2} = \frac{\left(\frac{3 x}{\sqrt{5}}+\frac{1}{\sqrt{7}}\right) y}{2 x^2+y}\ .
\end{split}
\right.
\]

Using numerical computation we conclude that this system does not have solutions in the interior of $\Omega$. On the edges of $\Omega$ we apply classical calculus techniques and receive:
\begin{itemize}
  \item[-] $f_3(0,y) = 3 y^2+\frac{2}{\sqrt{7}}\sqrt{1-3 y^2}$ is increasing for $y\in[0,1/2]$, so  $f_3(0,y)\le f_3(0,1/2) = \frac{3}{4}+\frac{1}{\sqrt{7}} = 1.127\ldots$, 
  \item[-] $f_3(a,y) \le f_3(a,y'_4) = 4.993\ldots$ where $y'_4=0.338\ldots $ is the unique real solution of $\partial f_3(a,y)/\partial y=0$ on the interval $(0, d)$,
  \item[-] $f_3(x,0) = 5 x^4+\left(\frac{6 x}{\sqrt{5}}+\frac{2}{\sqrt{7}}\right) \sqrt{1-x^2} \le f_3(a,0) = 3.360\ldots$, since $f_3(x,0)$ is increasing on $(0,a)$,
  \item[-]  $g_3(x) := f_3(x,(1+x^2)/2) =\frac{1}{35}(21\sqrt5 x+5\sqrt7)\sqrt{1-10x^2-3x^4}+\frac14(3+30x^2+47x^4)
\le g_3(x'_5) = 1.748\ldots$, where $x'_5= 0.287\ldots$ is the unique real solution of the equation $g'_3(x)=0$ on $(0,b)$,
  \item[-]  $g_4(x) := f_3(x,\sqrt{1-x^2}/\sqrt{3}) =1-x^2+5x^4+4x^2\sqrt{3(1-x^2)} \le g_4(a) = 4.526\ldots$, since $g_4$ is strictly increasing on $(b,a)$.
\end{itemize}

Combining what we obtained in (iii), we receive the greatest value of $f_3$ on $\Omega$ being $4.993\ldots$, for $x=a$ and $y=y_2$, i.e.,
\[ |a_5| \le f_3(a,y'_4) = 4.993\ldots.\]
\end{itemize}
\end{proof}

%

Next we shall derive the upper bounds of the differences of the moduli of initial coefficients of a normalized bi-univalent function $f$ from $\mathcal{S}_{b}$, i.e.
\[ |a_4| - |a_3| \quad \mbox{and} \quad |a_5| - |a_4|.\]

Using \eqref{for_a4}, the second relation in \eqref{eq-13} and $\frac{|w_{11}|}{a}\leq 1$,  we obtain
\[
\begin{split}
|a_4| - |a_3| & \le |a_4| - \frac1a|\omega_{11}||a_3| \le \left|a_4 - \frac1a\omega_{11}a_3 \right|\\
&= \left| 2\omega_{15} + \left( 6-\frac2a \right) \omega_{11}\omega_{13} + \left( 4 -\frac3a \right) \omega_{11}^3 \right|\\
&\le 2|\omega_{15}| + \left( 6-\frac2a \right) |\omega_{11}||\omega_{13}| + \left( \frac3a -4 \right) |\omega_{11}|^3\\
&\le \left( \frac3a -4\right) |\omega_{11}|^3 + \left( 6-\frac2a \right) |\omega_{11}||\omega_{13}| + \frac{2}{\sqrt5}\sqrt{1-|\omega_{11}|^2-3|\omega_{13}|^2} \\
&= \left( \frac3a -4\right) x^3 + \left( 6-\frac2a \right) xy + \frac{2}{\sqrt5}\sqrt{1-x^2-3y^2} =: f_4(x,y),
\end{split}
 \]
where $x=|\omega_{11}|$, $y=|\omega_{13}|$ and $(x,y)\in\Omega$.

\medskip

In a similar way, using additionally \eqref{for_a4} and \eqref{for_w17}, we receive
\[
\begin{split}
|a_5| - |a_4| & \le |a_5| - \frac1a|\omega_{11}||a_4| \le \left|a_5 - \frac1a\omega_{11}a_4 \right|\\
& = \left| 2\omega_{17} + \left( 6-\frac2a \right) \omega_{11}\omega_{15} + \left( 12-\frac6a \right) \omega_{11}^2 \omega_{13} + 3 \omega_{13}^2 + \left( 5-\frac{4}{a} \right)\omega_{11}^4\right|\\
& \le 2|\omega_{17}| + \left( 6-\frac2a \right) |\omega_{11}||\omega_{15}| + \left( 12-\frac6a \right) |\omega_{11}|^2 |\omega_{13}|\\
&\quad + 3 |\omega_{13}|^2 + \left( \frac{4}{a} -5\right)|\omega_{11}|^4\\
&\le \left(\frac{4}{a}-5\right) |\omega_{11}|^4 + \left(12-\frac{6}{a}\right) |\omega_{11}|^2 |\omega_{13}|+3 |\omega_{13}|^2\\
&\quad + \left[ \frac{2}{\sqrt{7}} + \frac{1}{\sqrt{5}}\left(6-\frac{2}{a}\right) |\omega_{11}| \right]\sqrt{1-|\omega_{11}|^2-3 |\omega_{13}|^2}\\
&= \left(\frac{4}{a}-5\right) x^4 + \left(12-\frac{6}{a}\right) x^2 y+3 y^2\\
&\quad + \left[ \frac{2}{\sqrt{7}} + \frac{1}{\sqrt{5}}\left(6-\frac{2}{a}\right) x \right]\sqrt{1-x^2-3 y^2} =: f_5(x,y),
\end{split}
 \]
again with $x=|\omega_{11}|$, $y=|\omega_{13}|$ and $(x,y)\in\Omega$. Here and above, $\Omega$ is defined by \eqref{def_Omega}.

In order to obtain the estimates of the moduli of the above coefficient differences we continue with finding the greatest values of $f_4$ and $f_5$ on the domain $\Omega$ in a similar manner as in Theorem 1.

\begin{thm}\label{th2}
Let $f\in\mathcal{S}_{b}$ be given by \eqref{e1}. Then
\begin{itemize}
  \item[($i$)] $|a_4| - |a_{3}|\leq 1.174\ldots$,
  \item[($ii$)] $|a_5| - |a_{4}| \leq 1.822\ldots$.
\end{itemize}
\end{thm}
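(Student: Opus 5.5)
The text preceding Theorem \ref{th2} already gives $|a_4|-|a_3|\le f_4(|\omega_{11}|,|\omega_{13}|)$ and $|a_5|-|a_4|\le f_5(|\omega_{11}|,|\omega_{13}|)$, with $(|\omega_{11}|,|\omega_{13}|)\in\Omega$ by \eqref{def_a} and Lemma \ref{lem-1}. So the proof reduces to computing $\max_\Omega f_4$ and $\max_\Omega f_5$, exactly as for $f_2$ and $f_3$ in Theorem \ref{th1}. Note the signs of the constants: with $a=0.7425$ we have $\frac3a-4\approx0.040>0$, $6-\frac2a\approx3.306>0$, $\frac4a-5\approx0.387>0$ and $12-\frac6a\approx3.919>0$. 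Hence $f_4$ and $f_5$ have the same structure as $f_2$ and $f_3$, only with different positive coefficients. For each of them I will first rule out interior critical points and then maximize over the five boundary pieces of $\Omega$.

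For $f_4(x,y)=\alpha x^3+\beta xy+\frac{2}{\sqrt5}\sqrt{1-x^2-3y^2}$ with $\alpha=\frac3a-4$ and $\beta=6-\frac2a$, I will use the combination $3y\,\partial_x f_4-x\,\partial_y f_4$. The square-root terms cancel, as they did for $f_2$, leaving the polynomial relation $3\alpha x^2y+3\beta y^2-\beta x^2=0$, i.e.
\[ x^2=\frac{3\beta y^2}{\beta-3\alpha y}. \]
Substituting this into $\partial_y f_4=0$ gives a single equation in $y$. I expect its root to give $x>a$, or else a point outside $\Omega$, so there is no interior maximum.

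On the boundary pieces:
\begin{itemize}
\item[(1)] $f_4(0,y)\le 2/\sqrt5$.
\item[(2)] $f_4(x,0)$ is a one-variable function, easily bounded.
\item[(3)] On $y=\frac12(1+x^2)$, $x\in[0,b]$, the values are small, as in the case of $g_1$.
\item[(4)] On $y=\sqrt{(1-x^2)/3}$, $x\in[b,a]$, we get $\alpha x^3+\beta x\sqrt{(1-x^2)/3}$.
\item[(5)] On $x=a$, the function is concave in $y$, so its maximum is at the unique root of $\partial_y f_4(a,y)=0$, namely $\beta a\sqrt{1-a^2-3y^2}=\frac{6}{\sqrt5}y$. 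This root should lie in $(0,d)$.
\end{itemize}
I expect the maximum $1.174\ldots$ to come from piece (5) or from the endpoint of piece (4). A quick check at $x=a$, $y=d$ gives $\alpha a^3+\beta a d\approx0.016+0.948$, which is below $1.174$, so the interior point on $x=a$ should be the maximizer.

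For $f_5$ the plan is the same. Form $3y\,\partial_x f_5-x\,\partial_y f_5$ as in \eqref{eq-q1}. This leaves one square-root term, so I will pair it with $\partial_y f_5=0$ solved for $\sqrt{1-x^2-3y^2}$, and then check numerically (resultant or a grid with sign analysis) that the system has no solution in the interior of $\Omega$. The five edges are then one-variable problems. On $x=a$ the function is $c_1+c_2y+3y^2+c_3\sqrt{1-a^2-3y^2}$, and its derivative vanishes at a unique point in $(0,d)$, which should give $1.822\ldots$. The other edges have to be shown smaller, especially the arc $y=\sqrt{(1-x^2)/3}$ at $x=a$.

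The main obstacle is rigorously excluding interior critical points of $f_5$. Unlike $f_4$, the elimination does not give a clean polynomial relation, so I will square the equations to obtain a polynomial system in $(x,y)$. I will then verify via a resultant in $y$ that no real root lies in $\Omega$, or accept, as the paper did in Theorem \ref{th1}, a careful numerical verification.
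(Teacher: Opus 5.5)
Your reduction to $\max_\Omega f_4$ and $\max_\Omega f_5$, and the scheme of interior critical points plus the five edges, are the same as the paper's. The problem is the step your plan is built on: you aim to \emph{rule out} interior critical points, and that step fails for both functions. The theorem's constants are exactly the values at interior critical points.

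For $f_4$, first a small correction. The combination is $3y\,\partial_x f_4-x\,\partial_y f_4=9\alpha x^2y+3\beta y^2-\beta x^2$, with $9\alpha$ rather than $3\alpha$. Hence $x^2=3\beta y^2/(\beta-9\alpha y)$. Substituting into $\partial_y f_4=0$ gives a root $y\approx 0.358$ with $x\approx 0.634$. This point lies inside $\Omega_2$, since $b<x<a$ and $y<\sqrt{(1-x^2)/3}\approx 0.446$. There $f_4\approx 1.174$. So your expectation that the root has $x>a$ is false.

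On the edge $x=a$, the concave function attains only $f_4(a,0.327\ldots)=1.139\ldots$. The arc $y=\sqrt{(1-x^2)/3}$ gives at most $0.969\ldots$. Hence $1.174\ldots$ cannot come from piece (5). Your corner check at $(a,d)$ only compares with a single endpoint and does not test this.

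The same thing happens for $f_5$. The system $\partial_x f_5=\partial_y f_5=0$ has a solution in the interior of $\Omega$ near $(0.717,0.312)$, where $f_5\approx 1.822$. On $x=a$ the concave one-variable function peaks near $y\approx 0.300$ with value only $1.819\ldots$. The arc gives $1.402\ldots$ at $x=a$.

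Consequently, a resultant or grid argument meant to show ``no interior solution'' will not succeed. If you had concluded there were none, you would be asserting $\max_\Omega f_4=1.139\ldots$ and $\max_\Omega f_5=1.819\ldots$, both of which are false, so the argument would be invalid.

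The fix, which is what the paper does, is as follows. Locate the interior critical point, verify that it lies in $\Omega$, evaluate $f_4$ (respectively $f_5$) there, and compare with the edge maxima. In both parts the interior value is the largest, and it gives $1.174\ldots$ and $1.822\ldots$.
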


\medskip

\begin{proof}$ $
\begin{itemize}
\item[(i)] At the beginning we consider critical points of $f_4$ on $\Omega$. For this reason we look for the solutions of the following system
\[
\left\{
\begin{split}
\frac{\partial f_4(x,y)}{\partial x} &= \left(\frac{9}{a}-12\right) x^2+\left(6-\frac{2}{a}\right) y-\frac{2}{\sqrt{5}}\frac{x}{\sqrt{1-x^2-3 y^2}} = 0\\
\frac{\partial f_4(x,y)}{\partial y} &= \left(6-\frac{2}{a}\right) x-\frac{6}{\sqrt{5}}\frac{y}{\sqrt{1-x^2-3 y^2}} = 0
\end{split}
\right..
\]
Hence,
\begin{multline*}
 3y\frac{\partial f_4(x,y)}{\partial x} - x \frac{\partial f_4(x,y)}{\partial y}\\
 = \frac1a\left[ -x^2 [9y(4a-3)+6a-2]+6 (3 a-1) y^2 \right] = 0,
 \end{multline*}
so
\[ x = \frac{y\sqrt{6} \sqrt{3 a -1}}{\sqrt{9y(4a-3)+6a-2}} := h_1(y)\]
Putting it into the second equation of the system we receive an equation of variable $y$. Numerical calculation shows that this equation has only one real solution in $(0,1)$, that is $y'_6=0.358\ldots$, such that $(h_1(y'_6),y'_6)=(0.634\ldots,0.358\ldots)\in\Omega$ and $f_4(h_1(y'_6),y'_6)=1.174\ldots$.

Finally, we need to find the greatest value of $f_4$ on the edges of $\Omega$:
\begin{itemize}
\item[-]  $f_4(0,y) = \frac{2}{\sqrt{5}}\sqrt{1-3 y^2} \le f_4(0,0) = \frac{2}{\sqrt5} = 0.894\ldots$,
\item[-]  $f_4(a,y) \le f_4(a,y'_7) = 1.139\ldots$, where $y'_7=0.327\ldots$ is the unique real solution of the equation $f_4'(a,y)=0$ on the interval $(0, d)$,
\item[-]  $f_4(x,0) =\left( \frac3a -4\right) x^3  + \frac{2}{\sqrt5}\sqrt{1-x^2} \le f_4(0,0) = \frac{2}{\sqrt5} = 0.894\ldots$, since $f_4(x,0)$ is a decreasing function on $(0,a)$,
\item[-]  $g_5(x) := f_4(x,(1+x^2)/2)=\tfrac15\sqrt{5-50x^2-15x^4}-x\left[\left(1-\frac{2}{a}\right)x^2-3+\frac{1}{a}\right]$, so  $g_5(x) \le g_5(x'_8) = 0.709\ldots$, where $x'_8=0.252\ldots$ is the unique real solution of $g'_5(x)=0$ on the interval $(0,b)$,
\item[-]  $g_6(x) := f_4(x,\sqrt{1-x^2}/\sqrt{3}) =2\left(1-\frac{1}{3a}\right)\sqrt{3(1-x^2)}+\left(\frac{3}{a}-4\right)x^3$, so  $g_6(x)\le g_6(x'_9) = 0.969\ldots$, where $x'_9=0.715\ldots$ is the unique real solution of $g'_3(x)=0$ on the interval $(b,a)$.
\end{itemize}

So, on the domain $\Omega$, the function $f_4$ achieves the greatest value $1.174\ldots$. Consequently,
\[ |a_4| - |a_3|\le 1.174\ldots.\]

\item[(ii)] Using Wolfram Mathematica we receive that the system of equations \linebreak $\frac{\partial f_5(x,y)}{\partial x} = 0$ and  $\frac{\partial f_5(x,y)}{\partial y} = 0$, has only one solution in the interior of $\Omega$, that is $(0.717\ldots, 0.312\ldots)$ with value $1.822\ldots$. On the edges of $\Omega$ we have:
\begin{itemize}
\item[-]  $f_5(0,y) =3 y^2 + \frac{2}{\sqrt{7}}\sqrt{1-3 y^2}\le f_5(0,1/2) = 1.127\ldots$ since $f_5(0,y)$ is increasing on $(0,1/2)$,
\item[-]  $f_5(a,y) \le f_5(a,y'_{10}) = 1.819\ldots$, where $y'_{10}=0.300\ldots$ is the unique real solution of the equation $f_5'(a,y)=0$ on the interval $(0, \d)$,
\item[-]  $f_5(x,0)=\left(\frac{4}{a}-5\right) x^4  + \left[ \frac{2}{\sqrt{7}} + \frac{1}{\sqrt{5}}\left(6-\frac{2}{a}\right) x \right]\sqrt{1-x^2} \le f_5(a,y'_{11}) = 1.374\ldots$, where $y'_{11}=0.667\ldots$ is the unique real solution of $f_5'(x,0)=0$ on $(0,a)$,
\item[-]  $g_7(x) := f_5(x,(1+x^2)/2)= \left[ \frac{1}{\sqrt{7}} + \frac{1}{\sqrt{5}}\left(3-\frac{1}{a}\right) x \right]\sqrt{1-10x^2-3 x^4} + \frac14(3+30x^2+7x^4)+\frac{1}{a}(x^4-3x^2)$, so $g_7(x) \le g_7(x'_{12}) = 1.317\ldots$, where $x'_{12}=0.247\ldots$ is the unique real solution of $g'_7(x)=0$ on $(0,b)$,
\item[-]  $g_8(x) := f_5(x,\sqrt{1-x^2}/\sqrt{3}) = 2\left(2-\frac{1}{a}\right)x^2\sqrt{3(1-x^2)}+1-x^2+\left(\frac{4}{a}-5\right)x^4$ and $g_8(x)\le g_8(a) = 1.402 \ldots$, since $g_8$ is strictly increasing on $(b,a)$.
\end{itemize}

So, on the domain $\Omega$, the function $f_5$ takes the greatest value $1.822\ldots$ and for this reason
\[ |a_5| - |a_4|\le 1.822\ldots.\]
\end{itemize}
\end{proof}

\section{The second Hankel determinant}

The Hankel determinant $H_{q,n}(f)$ of a given function $f$, for $q\geq 1$ and $n\geq 1$, is defined by
\[
        H_{q,n}(f) = \left |
        \begin{array}{cccc}
        a_{n} & a_{n+1}& \ldots& a_{n+q-1}\\
        a_{n+1}&a_{n+2}& \ldots& a_{n+q}\\
        \vdots&\vdots&~&\vdots \\
        a_{n+q-1}& a_{n+q}&\ldots&a_{n+2q-2}\\
        \end{array}
        \right |.
\]
Hankel determinants find application in the theory of singularities \cite{dienes}, as well as in the study of power series with integer coefficients (\cite{cantor,edrei,polya}).

\medskip

In the past several years much attention is given on finding the bounds (preferably sharp) of the moduli of Hankel determinants for various classes of analytic functions. Since the study of the general case faces serious  calculation challenges, the determinants of second and the third order, defined respectively by
\[H_{2,2}(f)= \left|\begin{array}{cc}
        a_2& a_3\\
        a_3& a_4
        \end{array}
        \right | = a_2a_4-a_{3}^2\] and
\[ H_{3,1}(f) =  \left |
        \begin{array}{ccc}
        1 & a_2& a_3\\
        a_2 & a_3& a_4\\
        a_3 & a_4& a_5\\
        \end{array}
        \right | = a_3(a_2a_4-a_{3}^2)-a_4(a_4-a_2a_3)+a_5(a_3-a_2^2),
\]
are studied instead.

\begin{thm}\label{th3}
Let $f\in\mathcal{S}_{b}$ be given by \eqref{e1}. Then
\[|H_{2,2}(f)| \leq 1.280\ldots.\]
\end{thm}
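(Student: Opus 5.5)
We express $H_{2,2}(f)=a_2a_4-a_3^2$ through the Grunsky coefficients of $f_*$ and then maximize a two-variable real function over the domain $\Omega$ from \eqref{def_Omega}, exactly as in Theorems \ref{th1} and \ref{th2}.

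\textbf{Reduction.} We use $a_2=2\omega_{11}$ and $a_3=2\omega_{13}+3\omega_{11}^2$ from \eqref{eq-13}, together with the reduced form \eqref{for_a4}, $a_4=2\omega_{15}+6\omega_{11}\omega_{13}+4\omega_{11}^3$. Then
\[
a_2a_4=4\omega_{11}\omega_{15}+12\omega_{11}^2\omega_{13}+8\omega_{11}^4,\qquad
a_3^2=4\omega_{13}^2+12\omega_{11}^2\omega_{13}+9\omega_{11}^4 .
\]
The mixed terms cancel, which is the reason for using \eqref{for_a4} rather than the original expression for $a_4$. We obtain
\[
H_{2,2}(f)=4\omega_{11}\omega_{15}-4\omega_{13}^2-\omega_{11}^4 .
\]

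\textbf{Bounding function.} The triangle inequality gives $|H_{2,2}(f)|\le 4|\omega_{11}||\omega_{15}|+4|\omega_{13}|^2+|\omega_{11}|^4$. From \eqref{eq-11} we have $|\omega_{15}|\le \frac1{\sqrt5}\sqrt{1-|\omega_{11}|^2-3|\omega_{13}|^2}$. With $x=|\omega_{11}|$ and $y=|\omega_{13}|$ this yields
\[
|H_{2,2}(f)|\le x^4+4y^2+\frac{4x}{\sqrt5}\sqrt{1-x^2-3y^2}=:f_6(x,y).
\]
Here $(x,y)\in\Omega$, where $x\le a$ comes from \eqref{def_a} (Tan's bound) and the bound on $y$ comes from Lemma \ref{lem-1}. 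We also keep the implicit constraint $1-x^2-3y^2\ge0$ coming from \eqref{eq-11}. This constraint cuts off part of $\Omega_1$, since $\tfrac12(1+x^2)$ exceeds the ellipse bound there. Consequently the relevant upper edge is effectively $y=\min\{\tfrac12(1+x^2),\sqrt{(1-x^2)/3}\}$ throughout.

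\textbf{Maximization.} First I look for interior critical points. Combining $3y\,\partial_x f_6-x\,\partial_y f_6$ eliminates the square root up to a simple factor and gives a relation between $x^2$ and $y$. Substituting this relation into $\partial_y f_6=0$, which reads $8y=\frac{12xy}{\sqrt5\sqrt{1-x^2-3y^2}}$, gives either $y=0$ or $\sqrt{1-x^2-3y^2}=\frac{3x}{2\sqrt5}$. On the second branch, $\partial_x f_6=0$ becomes a one-variable equation that I would solve numerically. I would then check whether the resulting point lies in $\Omega$ and evaluate $f_6$ there.

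Next come the edges. On $x=a$, the function $f_6$ is concave in $s=y^2$, with maximum where $1-a^2-3y^2=\tfrac{9a^2}{20}$; this gives about $1.23$. On $x=0$ we get $4y^2\le 4\cdot\tfrac14=1$. On $y=0$ we get $x^4+\frac{4x}{\sqrt5}\sqrt{1-x^2}\le$ about $1.19$. On the curve $y=\sqrt{(1-x^2)/3}$ we get $\tfrac43(1-x^2)+x^4$, which is largest at the left end of its range, about $1.21$. The remaining piece of the upper edge lies in $\Omega_1$ and must be cut by the ellipse constraint. I expect the global maximum $1.280\ldots$ to come either from the interior critical point or from a point near $x\approx b$, where $4y^2$ is large while the square-root term still contributes.

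\textbf{Main obstacle.} The hard step is the careful numerical analysis locating the maximum $1.280\ldots$. This requires correctly handling the shape of $\Omega$ together with the ellipse constraint, and verifying uniqueness of the critical points on each edge. I would do this with the same Mathematica-assisted root finding used in Theorem \ref{th2}.
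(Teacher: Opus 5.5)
Your reduction $H_{2,2}(f)=4\omega_{11}\omega_{15}-4\omega_{13}^2-\omega_{11}^4$, the majorant $f_6$ and the domain $\Omega$ are the same as the paper's. The values you compute are also right:
\begin{itemize}
\item about $1.233$ on $x=a$;
\item about $1.194$ on $y=0$;
\item $1$ on $x=0$;
\item about $1.213$ (at $x=b$) on the arc $y=\sqrt{(1-x^2)/3}$;
\item the unique interior critical point, reached via your branch $\sqrt{1-x^2-3y^2}=3x/(2\sqrt5)$, at $x^2=11/30$ with $f_6=1079/900\approx1.199$.
\end{itemize}
The gap is that you never treat the edge that carries the answer, and your reason for setting it aside is false. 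The constraint $1-x^2-3y^2\ge0$ cuts nothing off $\Omega_1$. The number $b$ is exactly the abscissa where the two bounds of Lemma~\ref{lem-1} meet, since $3b^4+10b^2-1=0$. Hence $\tfrac12(1+x^2)\le\sqrt{(1-x^2)/3}$ for $0\le x\le b$, and on $y=\tfrac12(1+x^2)$ one has $1-x^2-3y^2=\tfrac14(1-10x^2-3x^4)\ge0$. So the top edge of $\Omega_1$ is the whole parabolic arc, on which
\[
g_9(x)=f_6\bigl(x,\tfrac12(1+x^2)\bigr)=x^4+(1+x^2)^2+\frac{2x}{\sqrt5}\sqrt{1-10x^2-3x^4},\qquad 0\le x\le b .
\]
Its maximum, about $1.2804$ at $x\approx0.281$, is the global maximum of $f_6$ on $\Omega$. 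It is precisely the constant $1.280\ldots$ of the theorem. Every value you actually obtain is at most about $1.233$, so your argument as written never produces $1.280\ldots$; it only expects it to appear somewhere. To close the gap, write down $g_9$ and locate its critical point in $(0,b)$, for instance from
\[
g_9'(x)=8x^3+4x+\frac{2}{\sqrt5}\,\frac{1-20x^2-9x^4}{\sqrt{1-10x^2-3x^4}} .
\]
Then compare the value there with the other edges.

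Two minor points. First, the combination $3y\,\partial_x f_6-x\,\partial_y f_6=4y\bigl(3x^3-2x+\tfrac{3}{\sqrt5}\sqrt{1-x^2-3y^2}\bigr)$ does not eliminate the square root here, because the radical in $f_6$ is multiplied by $x$. This is harmless, since you in fact work directly from $\partial_y f_6=0$. Second, your bound $4y^2\le1$ on $x=0$ is the correct one. The paper's proof writes $f_6(0,1/\sqrt3)=4/3$, but $(0,1/\sqrt3)\notin\Omega$, and $4/3$ would exceed the stated bound.
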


\medskip

\begin{proof}
Using the expressions from \eqref{eq-13} we receive
\[
\begin{split}
|a_2a_4-a_3^3| &= |-\omega_{11}^4 -4 \omega_{13}^2 +4 \omega_{11} \omega_{15}|\\
&\le |\omega_{11}|^4 + 4 |\omega_{13}|^2 + 4 |\omega_{11}| \frac{1}{\sqrt5}\sqrt{1-|\omega_{11}|^2-3|\omega_{13}|^2}\\
&= x^4 + 4 y^2 + 4 x\frac{1}{\sqrt5}\sqrt{1-x^2-3y^2} := f_6(x,y),
\end{split}
\]
where $x=|\omega_{11}|$, $y=|\omega_{13}|$, $(x,y)\in\Omega$, with$\Omega$ defined by \eqref{def_Omega}.

\medskip

We start with deriving critical points of $f_6$ inside $\Omega$ from the system
\[
\left\{
\begin{split}
\frac{\partial f_6(x,y)}{\partial x} &= 4 x^3+\frac{4 (1-2x^2-3y^2)}{\sqrt{5} \sqrt{1-x^2-3 y^2}} = 0\\
\frac{\partial f_6(x,y)}{\partial y} &= 4y \left(2-\frac{3 x}{\sqrt{5} \sqrt{1-x^2-3 y^2}}\right) = 0
\end{split}
\right..
\]
From the second equation $y=\frac{\sqrt{20-29 x^2}}{2 \sqrt{15}} =: h_2(x)$, which applied in the first one, after simplification, gives
\[ x\left(x^2-\frac{11}{30}\right) = 0.\]
This leads to a unique solution $(x'_{13},y'_{13})$ of the system in the interior of $\Omega$, where $x'_{13}=\sqrt{\frac{11}{30}}=0.605\ldots$ and $y'_{13}=h_2(x'_{13})=\frac{1}{30}\sqrt{\frac{281}{2}}=0.395\ldots$. Moreover, $f_6(x'_{13},y'_{13})= 1079/900 = 1.198\ldots$. 

On the edges of $\Omega$ we have:
\begin{itemize}
\item[-]  $f_6(0,y) = 4 y^2 \le f_6(0,1/\sqrt3) = 4/3 = 1.(3)$,
\item[-]  $f_6(a,y) \le f_6(a,y'_{14}) = 1.232\ldots$, where $y'_{14}=0.258\ldots$ is the unique real solution of the equation $f_6'(a,y)=0$ on the interval $(0, d)$,
\item[-]  $f_6(x,0) \le f_6(a,0) = 1.193\ldots$, since $f_6(x,0)$ is an increasing function on $(0,a)$,
\item[-]  $g_{9}(x) := f_6(x,(1+x^2)/2) \leq g_{9}(x'_{15})=1.280\ldots$, where $x'_{15}=0.281\ldots$ is the unique real solution of the equation $g'_{9}(x)=0$ on the interval $(0, b)$;
\item[-]  $g_{10}(x) := f_6(x,\sqrt{1-x^2}/\sqrt{3}) =x^4+\tfrac43(1-x^2)$, $x\in[b,a]$, so $g_{10}(x)\leq \max\{g_{10}(b), g_{10}(a)\}=g_{10}(b)=1.213\ldots$.
\end{itemize}

\medskip

Summing up,  the greatest value of $f_6$ in $\Omega$ is equal to $1.280\ldots$.
\end{proof}

\begin{rem}
In \cite{2026-zapeawa} was proven that for $f$ in the class $\mathcal{S}$, $|H_{3,1}(f)|\leq \frac23\sqrt{\frac{73}{35}}=0.962\ldots$. Since $\mathcal{S}_b\subset \mathcal{S}$, so the same inequality holds true also for the class $\mathcal{S}_b$.
\end{rem}

\medskip

\section{The third and the forth logarithmic coefficients}

The logarithmic coefficient, $\gamma_n$, of a univalent function $f$  are defined by
  \be\label{log-co-1}
  F_f(z) := \log\frac{f(z)}{z}=2\sum_{n=1}^\infty \gamma_n z^n.
  \ee

\medskip

Relatively little exact information is known about the coefficients $\gamma_n$. For the class $\mathcal{S}$ a natural conjecture $|\gamma_n|\le1/n$, inspired by the Koebe function (whose logarithmic coefficients are $1/n$) is false, even in order of magnitude (see Duren \cite[Section 8.1]{duren}).
The sharp estimates of logarithmic coefficients of $f\in\mathcal{S}$ are known only for $\gamma_1$ and $\gamma_2$, namely,
\[|\gamma_1|\le1\quad\mbox{and}\quad |\gamma_2|\le \frac12+\frac1e=0.635\ldots .\]

Recently, for the functions $f$ from the general class of univalent functions was proven that $|\gamma_{3}|\leq 0.556\ldots$ (\cite{124}), $|\gamma_4| \le 0.510\ldots$ (\cite{106}), and $|\gamma _{3}|-|\gamma_{2}|\leq \frac{1}{\sqrt{5}}$, $|\gamma _{4}|-|\gamma_{3}|\leq \frac{1}{\sqrt{7}}$ (\cite{113}). In this section we discuss the estimates of the moduli of the third and the fourth logarithmic coefficients for functions in $\mathcal{S}_{b}$.

From the relations \eqref{e1} and \eqref{log-co-1}, after  equating the coefficients we receive:
\[
\begin{split}
 \gamma_{1}&=\frac{a_{2}}{2},\\
 \gamma_{2}&=\frac{1}{2}\left(a_3-\frac{1}{2}a_{2}^{2}\right), \\
 \gamma_3 &=\frac{1}{2}\left(a_4-a_2a_3+\frac13a_2^3\right),\\
\gamma_4 &=\frac{1}{2}\left(a_5-a_2a_4-\frac{1}{2}a_3^2+a_{2}^{2}a_{3}-\frac{1}{4}a_{2}^{4}\right).
\end{split}
\]

So, for $f$ in $\mathcal{S}_{b}$, $|\gamma_1| \le a/2 = 0.7425$.

\medskip

Further, for the next three logarithmic coefficients, in a similar way as before, using \eqref{eq-13}, we receive
\[
\begin{split}
 |\gamma_{2}| &= \frac{1}{2} \left(x^2+2 y\right) := f_7(x,y), \\
 |\gamma_3| &= \frac{x^3}{3} + xy + \frac{1}{\sqrt{5}}\sqrt{1-x^2-3 y^2} := f_8(x,y),\\
 |\gamma_4| &= \frac{x^4}{4} +x^2 y+\frac{y^2}{2} + \left(\frac{x}{\sqrt{5}}+\frac{1}{\sqrt{7}}\right)\sqrt{1-x^2-3 y^2} := f_9(x,y),
\end{split}
\]
with $x=|\omega_{11}|$, $y=|\omega_{13}|$, $(x,y)\in\Omega$ defined by\eqref{def_Omega}.

By studying the functions $f_7$, $f_8$, and $f_9$ as functions of two variables we can obtain estimates of $\gamma_k$, $k=2,3,4$.

\medskip
For the second and the fourth logarithmic coefficient, in a similar way as in the previous theorems, we receive estimates $|\gamma_2|\le 0.662\ldots$ and $|\gamma_{4}| \leq 0.613\ldots$, which are weaker then the estimates for the general class $\mathcal{S}$, namely:  $0.635\ldots$ and $0.510\ldots$, respectively.

For the third logarithmi0c coefficient, applying the similar technique as in the previous theorems, we receive that $f_8$ has no critical points in the interior of $\Omega$, while on the edges achieves the greatest value $0.551\ldots$ for $x=a$ and $y'_{16}=0.267\ldots$.

\medskip

\begin{thm}\label{th4}
Let $f\in\mathcal{S}_{b}$ be given by \eqref{e1}. Then
\[|\gamma_{3}| \leq 0.551\ldots.\]
\end{thm}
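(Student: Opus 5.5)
We follow the scheme already used for Theorems~\ref{th1}--\ref{th3}. First we express $\gamma_3$ through Grunsky coefficients. Substituting \eqref{eq-13} into $\gamma_3=\frac12\left(a_4-a_2a_3+\frac13a_2^3\right)$ and using the form \eqref{for_a4} of $a_4$ gives
\[
2\gamma_3 = 2\omega_{15}+6\omega_{11}\omega_{13}+4\omega_{11}^3-2\omega_{11}(2\omega_{13}+3\omega_{11}^2)+\tfrac83\omega_{11}^3,
\]
that is,
\[
\gamma_3=\omega_{15}+\omega_{11}\omega_{13}+\tfrac13\omega_{11}^3 .
\]
By the triangle inequality and \eqref{eq-11}, in the form $|\omega_{15}|\le\frac1{\sqrt5}\sqrt{1-|\omega_{11}|^2-3|\omega_{13}|^2}$, we obtain $|\gamma_3|\le f_8(x,y)$ with $x=|\omega_{11}|$, $y=|\omega_{13}|$. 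Here $(x,y)\in\Omega$: the restriction $x\le a$ comes from Tan's bound via \eqref{def_a}, and the restriction on $y$ comes from Lemma~\ref{lem-1}. It therefore suffices to show that $\max_\Omega f_8=0.551\ldots$.

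\textbf{Interior.} The critical point equations are
\[
x^2+y-\frac{x}{\sqrt5\sqrt{1-x^2-3y^2}}=0,\qquad x-\frac{3y}{\sqrt5\sqrt{1-x^2-3y^2}}=0 .
\]
Forming $3y\,\partial_x f_8-x\,\partial_y f_8$ eliminates the square root and gives $3x^2y+3y^2-x^2=0$, that is, $x^2=\frac{3y^2}{1-3y}$, which forces $y<1/3$. Substituting into the second equation yields a single polynomial equation in $y$. We then check numerically that its root in $(0,1/3)$ produces an $x$ exceeding $a=0.7425$ or lying outside $\Omega$, exactly as happened for $f_2$. Indeed, $f_8$ is essentially $f_2/2$ with the cubic coefficient changed from $2$ to $1/3$, so we expect the same qualitative picture. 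Consequently the maximum lies on $\partial\Omega$.

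\textbf{Boundary.} We examine the five edges of $\Omega$.
\begin{itemize}
\item[(1)] On $x=0$, $f_8(0,y)=\frac1{\sqrt5}\sqrt{1-3y^2}\le \frac1{\sqrt5}=0.447\ldots$.
\item[(2)] On $y=0$, $f_8(x,0)=\frac{x^3}{3}+\frac1{\sqrt5}\sqrt{1-x^2}$. This is a one-variable function; we bound it by checking its endpoints and its critical points, which gives a value below $0.5$.
\item[(3)] On $y=\frac12(1+x^2)$ with $x\in[0,b]$, the function becomes one-variable. Since $x\le b\approx0.31$, its value stays small, and we bound it by locating the root of its derivative.
\item[(4)] On $y=\sqrt{(1-x^2)/3}$ with $x\in[b,a]$, the square root term vanishes and we get $\frac{x^3}{3}+\frac{x}{\sqrt3}\sqrt{1-x^2}$. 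This is increasing there, so it is at most its value at $a$, roughly $0.136+0.287=0.42$.
\item[(5)] On $x=a$, $y\in[0,d]$, we have $f_8(a,y)=\frac{a^3}3+ay+\frac1{\sqrt5}\sqrt{1-a^2-3y^2}$, which is concave in $y$. Its unique critical point solves $a\sqrt5\sqrt{1-a^2-3y^2}=3y$, i.e. $y^2=\frac{5a^2(1-a^2)}{9+15a^2}$, giving $y'_{16}=0.267\ldots$. The value there is $0.551\ldots$, and it dominates all the other edges.
\end{itemize}

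The main obstacle is certifying that the interior system has no admissible solution, together with the edge (3) computation; both are numerical. For the interior we would reduce to the explicit polynomial in $y$ and bound its root rigorously, for instance by a sign change, to show that the corresponding $x$ lies outside $[0,a]$.
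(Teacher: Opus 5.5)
Your proposal is correct and follows the same route as the paper: you bound $|\gamma_3|\le f_8(x,y)$ on $\Omega$ via $\gamma_3=\omega_{15}+\omega_{11}\omega_{13}+\frac13\omega_{11}^3$, show there is no admissible interior critical point (the only candidate, from $45y^3-57y^2+3y+2=0$, has $y\approx0.2435$ and $x\approx0.81>a$), and obtain the maximum $0.551\ldots$ on the edge $x=a$ at $y'_{16}=0.267\ldots$, with all other edges staying below $0.45$. Your heuristic aside that $f_8$ is $f_2/2$ with only the cubic coefficient changed is slightly inaccurate, since the $xy$ coefficient also drops from $3$ to $1$, but it plays no role in the argument.
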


\end{document}